\theoremstyle{plain}
\newtheorem{thm}{\protect\theoremname}
  \theoremstyle{plain}
  \newtheorem{cor}[thm]{\protect\corollaryname}
\let\origtableofcontents=\tableofcontents
\def\tableofcontents{\@ifnextchar[{\origtableofcontents}{\gobbletableofcontents}}
\def\gobbletableofcontents#1{\origtableofcontents}
  \providecommand{\corollaryname}{Corollary}
\providecommand{\theoremname}{Theorem}
  \providecommand{\corollaryname}{Corollary}
\providecommand{\theoremname}{Theorem}
  \providecommand{\corollaryname}{Corollary}
\providecommand{\theoremname}{Theorem}
  \providecommand{\corollaryname}{Corollary}
\providecommand{\theoremname}{Theorem}
\begin{document}

\title{{Probabilistic approach to a cell growth model } }

\author{{\normalsize{}{}{}{}Gregory Derfel}{\small{}{}{}{}}\\
 {\small{}{}{}{}Department of Mathematics}\\
 {\small{}{}{}{}\vspace{0.3cm}
 Ben-Gurion University of the Negev Beersheba, Israel}\\
 {\normalsize{}{}{}{}Yaqin Feng}{\small{}{}{}{}}\\
 {\small{}{}{}{}Department of Mathematics }\\
 {\small{}{}{}{}\vspace{0.3cm}
 Ohio University, Athens, Ohio 45701,USA}\\
 {\normalsize{}{}{}{}Stanislav Molchanov }{\small{}{}{}{}}\thanks{This author was partially supported by Russian Science Foundation,
Project No 17-11-01098. }\\
 {\small{}{}{}{}Department of Mathematics and Statistics }\\
 {\small{}{}{}{}University of North Carolina at Charlotte, Charlotte,
NC 28223,USA }\\
 {\small{}{}{}{}\vspace{0.3cm}
 National Research University, Higher School of Economics, Russian
Federation }\\
 }

\maketitle
\textbf{Abstract:} We consider the time evolution of the supercritical
Galton-Watson model of branching particles with extra parameter (mass).
In the moment of the division the mass of the particle (which is growing
linearly after the birth) is divided in random proportion between
two offsprings (mitosis). Using the technique of moment equations
we study asymptotics of the the mass-space distribution of the particles.
Mass distribution of the particles is the solution of the 
equation with linearly transformed argument: functional, functional-differential
or integral. We derive several limit theorems describing the fluctuations
of the density of the particles, first two moments of the total masses
etc. Also, we consider the branching process in the presence of a
random spatial motion (say, diffusion). Here we discuss the classical
Fisher, Kolmogorov, Petrovski, Piskunov model and distribution of
mass inside the propogation front.

\section{Introduction}

A model for the simultaneous growth and division of a cell population,
structured by size, was introduced and studied by Hall and Wake \cite{Hall1989ASFCS}
(cf. \cite{Round1990ASFCS}). The original model deals with symmetrical
cell-division, where each cell divides into $\kappa$ equally sized
daughter cells. Under this assumption Hall and Wake proved that the
steady-size mass distribution exists and satisfies the celebrated
pantograph functional-differential equation 
\begin{equation}
y'(x)=ay(\kappa x)+by(x)\label{eq:pantograph}
\end{equation}
where $\kappa>1$. Since then, different variations and extensions
of the original model have been studied and used to describe plant
cells, diatoms (\cite{Bass2004ASFCS}, \cite{Begg2008ASFCS}, \cite{Derfel2009ASFCS},
\cite{Daukste_Basse_Baguley_Wall_2012}) and also tumor growth \cite{BBMJBW_tumor2003}
.

In the present paper, in order to describe cell growth model 
we use the supercritical Galton-Watson model of branching particles
with extra parameter (mass). Similar approach was applied earlier
in \cite{Derfel2009ASFCS}.

Namely, we assume that the mass of the particle is growing linearly
between the exponentially distributed splitting moments and that in
the moment of the division the mass of the particle is divided in
a random proportion between two offspring (see Figure 1 below). Notice
that under these assumptions splitting moments depend on mass.




The model described above gives only rather schematic description
of the cell growth process, but it is interesting from the mathematical
point of view, and hopefully in some cases may reflect an important
qualitative features of real biological systems (\cite{Bass2004ASFCS},
\cite{Begg2008ASFCS}, \cite{Derfel2009ASFCS}, \cite{Daukste_Basse_Baguley_Wall_2012},
\cite{BBMJBW_tumor2003}).

We start from the study of the total number of the particles $N(t)$,
their distribution with respect to the mass, and first two moments
of the total mass distribution. We study the asymptotic of the mass
distribution of the particles and prove several limit theorems.

Let $N(t)$ be the supercritical Galton-Watson process \cite{gw1975}
with mortality rate $\mu$ and splitting rate $\beta>\mu\geq0$. Assume
that at initial time 0, $N(0)=1$. For the generating function $u_{z}(t)=Ez^{N(t)},$
we have the well known equation \cite{teh}: 
\begin{eqnarray}
\frac{\partial u_{z}(t)}{\partial t} & = & \beta u_{z}(t)^{2}-(\beta+\mu)u_{z}(t)+\mu\label{eq: generating function}\\
u_{z}(0) & = & 1.\nonumber 
\end{eqnarray}
Let $\delta=\beta-\mu$ and $\gamma=\frac{\mu}{\beta}$, elementary
calculations give for $N(t)$ the geometric distribution 
\begin{equation}
P(N(t)=k)=\frac{\left(1-\gamma\right)^{2}\left(1-e^{\delta t}\right)^{k-1}e^{\delta t}}{\left(\gamma-e^{\delta t}\right)^{k+1}}\,\,,k\geq1,\label{Chapequation4}
\end{equation}

\begin{equation}
P(N(t)=0)=\frac{1-e^{\delta t}}{1-\frac{1}{\gamma}e^{\delta t}},\,\,\label{Chapequation4-1}
\end{equation}

\begin{eqnarray*}
EN(t)=e^{\delta t},
\end{eqnarray*}
and for any $a\geq0$, as $t\rightarrow\infty,$ 
\begin{eqnarray*}
P\left\{ \frac{N(t)}{e^{\delta t}}\in(a,a+da)\right\} =(1-\gamma)e^{-a}+\gamma\delta_{0}(a).
\end{eqnarray*}
Assume now that the initial particles have mass $m>0$. The evolution
of the mass $m(t)$ includes two features. First of all we assume
the linear growth: 
\[
m(t)=m+vt,
\]
where $v>0$ until the splitting. Probability of the splitting in
each time interval $[t,t+dt]$ equals $\beta dt$, i.e. the moment
$\tau_{1}$ of splitting of the initial particle has exponential law
with parameter $\beta$ and $P\left\{ \tau>t\right\} =e^{-\beta t}$.
At the moment $\tau_{1}+0$, the particle with mass $m+v\tau_{1}$
is divided into two particles with the random masses:$m'=\theta(m+v\tau_{1})$,
$m''=(1-\theta)(m+v\tau_{1})$. Here $\theta\in[0,1]$ is symmetrically
distributed (with respect to the center $0.5\in[0,1]$) random variable
which has the density $q(x)=q(1-x)$, $x\in[0,1]$. As usually we
assume that the random variable $\theta_{i}$, $i=1,2,\cdots$ for
different splittings are independent. The dynamics of the sub-populations
generated by different offspring are also independent.

Let us introduce the main object of our study: the moment generating
function of the two random variables: $N(t):=$ total numbers of particles
at the moment $t>0,$ $M(t):=$ total mass of the particles at the
moment $t$. We introduce the generating function of the form: 
\begin{equation}
u(t,m;z,k)=E_{m}z^{N(t)}e^{-kM(t)},|z|\leq1,k\geq0.\label{eq:momentgenerating}
\end{equation}
The following results are the basis for the further analysis: 
\begin{thm}
\label{Thereom moment generating}Let $u(t,m;z,k)=Ez^{N(t)}e^{-kM(t)},$
then $u(t,m;z,k)$ satisfy the following functional-differential equation:

\begin{eqnarray}
\frac{\partial u(t,m;z,k)}{\partial t} & = & \frac{\partial u(t,m;z,k)}{\partial m}v+\beta\int_{0}^{1}u(t,\theta m;z,k)\cdot u(t,(1-\theta)m;z,k)q(\theta)d\theta\nonumber \\
 &  & -(\beta+\mu)u(t,m;z,k)+\mu\label{eq:basicequations}\\
u(0,m;z,k) & = & ze^{-km}.\nonumber 
\end{eqnarray}
\end{thm}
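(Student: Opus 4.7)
The plan is to derive the equation by a standard first-step analysis, conditioning on what happens to the initial particle during an infinitesimal interval $[0,dt]$. The argument exploits the strong Markov property, which holds here because the waiting times to split or to die are exponential and, by assumption, the subpopulations generated by the two daughters after a split evolve independently.

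First I would enumerate the three mutually exclusive events in $[0,dt]$. With probability $1-(\beta+\mu)\,dt+o(dt)$ nothing happens, the mass of the single ancestor becomes $m+v\,dt$, and the remaining evolution of duration $t$ is, by the strong Markov property, a fresh copy of the process started at that mass, contributing $u(t,m+v\,dt)$. With probability $\beta\,dt+o(dt)$ the particle splits; conditional on the splitting ratio $\theta$, the two daughter subpopulations are independent, so writing $(N_i(t),M_i(t))$ for the size and mass contributed by daughter $i$ one has $N(t+dt)=N_1(t)+N_2(t)$ and $M(t+dt)=M_1(t)+M_2(t)$, whence the essential factorisation
\[
E\!\left[z^{N(t+dt)}e^{-kM(t+dt)}\,\big|\,\theta\right] = u(t,\theta m)\,u(t,(1-\theta)m),
\]
and integrating against $q(\theta)\,d\theta$ yields the splitting contribution. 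With probability $\mu\,dt+o(dt)$ the particle dies, so $N(t+dt)=0$, $M(t+dt)=0$, and the integrand equals $1$.

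Second, I would assemble these pieces into the identity
\[
u(t+dt,m)=\bigl(1-(\beta+\mu)\,dt\bigr)u(t,m+v\,dt)+\beta\,dt\!\int_0^1\! u(t,\theta m)\,u(t,(1-\theta)m)\,q(\theta)\,d\theta+\mu\,dt+o(dt),
\]
Taylor-expand $u(t,m+v\,dt)=u(t,m)+v\,\partial_m u(t,m)\,dt+o(dt)$, subtract $u(t,m)$, divide by $dt$ and send $dt\to 0$ to obtain the stated functional-differential equation. The initial condition $u(0,m;z,k)=z e^{-km}$ is immediate from $N(0)=1$ and $M(0)=m$.

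The main obstacle is justifying the splitting factorisation and the infinitesimal expansion. The factorisation rests crucially on the independence of the two daughter sub-trees together with the additivity of both $N$ and $M$ across a split, which is precisely why the exponential--power product $z^{N}e^{-kM}$ is the natural test functional (a non-additive exponent such as $M^{2}$ would break this step). A secondary technical point is smoothness: one needs $u\in C^{1}$ in $(t,m)$ to legitimise the Taylor expansion in $m$. This can either be verified \emph{a posteriori} from the derived equation, or handled \emph{ab initio} by first writing the Duhamel (mild) form of (\ref{eq:basicequations}) along the characteristic curves $m\mapsto m+vt$ (on which the linear transport operator becomes a total derivative) before passing to the differential form displayed in the statement.
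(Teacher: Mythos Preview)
Your proposal is correct and follows essentially the same first-step (forward Kolmogorov) analysis as the paper: condition on the three events in $[0,dt]$, use the independence of daughter subpopulations to factorise the generating functional, assemble the full expectation identity, and let $dt\to 0$. Your write-up is in fact more careful than the paper's, which leaves the factorisation, the Taylor expansion in $m$, and the regularity issues implicit.
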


\begin{proof}
The formal derivation of this equation is based on the standard technique:
balance of the probabilities in the infinitesimal initial time interval
$[0,dt]$. Namely, let's consider 
\[
u(t+dt,m;z,k)=E_{m}z^{N(t+dt)}e^{-kM(t+dt)}
\]
and then let's split the interval $[0,t+dt]$ into two parts $[0,dt]\cup[dt,t+dt]$.
At the moment $t=0$, we have one particle in the point $x$ with
mass $m$ and during $[0,dt]$, we observe one of the following : 
\begin{itemize}
\item splitting of the initial particle into two particles with probability
$\beta dt$; 
\item annihilation of the initial particle with probability $\mu dt$; 
\item nothing happen, no annihilation and no splitting with probability
$1-\beta dt-\mu dt.$ 
\end{itemize}
Now one can apply the full expectation formula: {\small{}{}{}{}
\begin{eqnarray*}
u(t+dt,m;z,k) & = & u(t,m+vdt;z,k)(1-\beta dt-\mu dt)+\\
 &  & \beta dt\int_{0}^{1}u(t,\theta m;z,k)\cdot u(t,(1-\theta)m;z,k)q(\theta)d\theta+\mu dt
\end{eqnarray*}
}Theorem 1 is obtained by letting $dt\rightarrow0$. 
\end{proof}
For $k=0$ in equation (\ref{eq:momentgenerating}), it will lead
to the equation for $Ez^{N(t)},$ which we already discussed. Let's
put $z=1$ and study the equation (\ref{eq:basicequations}) as a
function of $k$. Denote $L_{1}(t,m):=E_{m}(M(t))=\frac{-\partial u}{\partial k}|_{z=1,k=0},$
then from equation (\ref{eq:basicequations}), we have 
\begin{equation}
\left\{ \begin{array}{l}
\frac{\partial L_{1}(t,m)}{\partial t}=\frac{\partial L_{1}(t,m)}{\partial m}\ v+2\beta\int_{0}^{1}(L_{1}(t,\theta m)-L_{1}(t,m))q(\theta)d\theta+(\beta-\mu)L_{1}(t,m)\\
\hspace{2.5cm}\\
L_{1}(0,m)=m.
\end{array}\right.\label{eq:mass first moment-2}
\end{equation}

Differentiate the equation (\ref{eq:basicequations}) twice over $k$
and substituting $z=1$ and $k=0$, we will get the equation for the
second moment $L_{2}(t,m):=E_{m}(M^{2}(t))=\frac{\partial^{2}u}{\partial k^{2}}|_{z=1,k=0}$

\begin{equation}
\left\{ \begin{array}{l}
\frac{\partial L_{2}(t,m)}{\partial t}=\frac{\partial L_{2}(t,m)}{\partial m}\ v+2\beta\int_{0}^{1}(L_{2}(t,\theta m)-L_{2}(t,m))q(\theta)d\theta+(\beta-\mu)L_{2}(t,m)\\
+2\beta\int_{0}^{1}(L_{1}(t,\theta m)L_{1}(t,(1-\theta)m))q(\theta)d\theta\\
\hspace{2.5cm}\\
L_{2}(0,m)=m^{2}.
\end{array}\right.\label{eq:mass first moment-1-1}
\end{equation}

The rest of the paper is organized as follows. In section 2, we discuss
the mass process. In section 3, we study the analytic properties of
the limiting mass distribution density. Following this, we discuss
the moment of total mass of the population in section 4. 

\section{KPP model and distribution of mass inside the propogation front of
particles}

One can derive now the equation for the distribution of mass in the
case of random motion of particles (migration).

The classical KPP model describes the evolution of the new particles
in the presence of the branching and random spatial dynamics, say,
diffusion\cite{Kolmogorov_1937}.

We extended the KPP model by considering an extra parameter mass.
We start from the single particle of the mass $m$ located at the
moment $t=0$ in the point $x\in\mathbb{R}^{d}$, i.e $(x,m)\in\mathbb{R}^{d}\times\mathbb{R}_{+}^{1}$.
Evolution of the particle and its mass until the first reaction is
given by the Brownian Motion with the diffusion coefficient $\kappa$ for the space position
	\[ x(t) = x + \kappa b(t)
\]
where $b(t) \in  \mathbb{R}^d$ is a standard Brownian Motion. The generator of $x(t)$ is the usual Laplacian $\mathcal{L} = \frac{\sigma^2 \Delta}{2}$.
. For the mass $m(t)$, we
assume the linear growth: 
\[
m(t)=m+vt,
\]
where $v>0$, though one can study more general processes containing
diffusion.

It starts from the single particle at the point $x\in\mathbb{R}^{d}$
with mass $m$. The particles performs Brownian motion with the generator
$\mathit{\kappa}\Delta$, $\kappa>0$ is the diffusion coefficient.
During time $[t,t+dt]$ each particle in the population can splits
into two particles (offsprings) with probability $\beta dt$, $\beta>0$
is the birth rate. After splitting, this mass is randomly distributed
between offsprings and starts to grow linearly before the next splitting.
During time interval $[t,t+dt]$, any particles of the mass $m$ is
divided into two particles of the random mass $m'=\theta m$, $m''=(1-\theta)m$,
see section 2. Here $\theta\in[0,1]$ is symmetrically distributed
(with respect to the center $0.5\in[0,1]$) random variable with the
density $q(x)=q(1-x)$, $x\in[0,1]$. The offspring perform the same
but independent dynamics like the initial particle.

Like in the standard theory of the reaction-diffusion equations, we
can present the evolution of the particles field as a Markov Process
in the Fock space 
\[
X=\varnothing\cup\underbrace{(\mathbb{R}^{d}\times\mathbb{R}_{+}^{1})}_{\mathcal{F}_{1}}\cup\cdots\underbrace{\left(\mathbb{R}^{d}\times\mathbb{R}_{+}^{1}\right)^{n}}_{\mathcal{F}_{n}}\cup\cdots,
\]
see \cite{Fisher}.

\begin{figure}[H]
\centering{}\includegraphics[width=0.6\columnwidth]{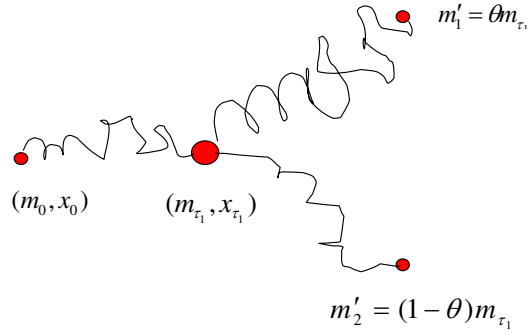}\caption{Evolution of the particles}
\end{figure}

For each open set $\Gamma\subset\mathbb{R}^{d}\times\mathbb{R}_{+}^{1}$,
Let's define the following notations: $N(t,\Gamma):=$ total numbers
of particles at the moment $t$ in the set $\Gamma,$ $M(t,\Gamma):=$
total mass of the particles at the moment $t$ in the set $\Gamma$.
We introduce the generating function of the form: 
\[
u(t,x,m,\Gamma;z,k)=E_{x,m}z^{N(t,\Gamma)}e^{-kM(t,\Gamma)},|z|\leq1,k\geq0,
\]
then we have the following equation for $u(t,x,m,\Gamma;z,k)$. 
\begin{thm}
\label{Thereom moment generating-1}Let $u(t,x,m,\Gamma;z,k)=E_{x,m}z^{N(t,\Gamma)}e^{-kM(t,\Gamma)},$
then $u(t,x,m,\Gamma;z,k)$ satisfy the following functional-differential
equation:

{\small{}{}{} 
\begin{eqnarray}
\frac{\partial u(t,x,m,\Gamma;z,k)}{\partial t} & = & \kappa\Delta u(t,x,m,\Gamma;z,k)-\beta u(t,x,m,\Gamma;z,k)+\frac{\partial u(t,x,m,\Gamma;z,k)}{\partial m}v\nonumber \\
 &  & +\beta\int_{0}^{1}u(t,x,\theta m,\Gamma;z,k)\cdot u(t,x,(1-\theta)m,\Gamma;z,k)q(\theta)d\theta\label{eq:basicequations-1}\\
u(0,m;z,k) & = & ze^{-km}I_{x}(\Gamma).\nonumber 
\end{eqnarray}
}{\small \par}
\end{thm}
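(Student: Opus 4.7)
The plan is to adapt the infinitesimal balance argument of Theorem~\ref{Thereom moment generating} to the spatial setting. I split the interval $[0,t+dt]$ into $[0,dt]\cup[dt,t+dt]$ and analyze what can happen to the single initial particle, located at $(x,m)$, during the short interval $[0,dt]$. Three (essentially) independent elementary events are possible at order $dt$: the position is perturbed by a Brownian increment $x\mapsto x+\kappa\, b(dt)$ produced by the spatial generator $\kappa\Delta$; the mass grows deterministically, $m\mapsto m+v\,dt$; and either a binary split occurs with probability $\beta\,dt$ (producing two offspring with masses $\theta m$ and $(1-\theta)m$ distributed according to $q$), or nothing happens with probability $1-\beta\,dt$.

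Applying the full expectation formula and the strong Markov property of the branching-diffusion process at the splitting time, the two sub-populations generated by the offspring are conditionally independent, so the contributions of $z^{N(t,\Gamma)}$ and $e^{-kM(t,\Gamma)}$ from the two sub-trees factor. Together with the fact that the initial particle's spatial position after $dt$ is $x+\kappa\, b(dt)$, this yields, up to $o(dt)$,
\begin{align*}
u(t+dt,x,m,\Gamma;z,k) &= (1-\beta\,dt)\, E_{x}\bigl[u(t,x+\kappa b(dt),m+v\,dt,\Gamma;z,k)\bigr]\\
 &\quad +\beta\, dt\int_{0}^{1} u(t,x,\theta m,\Gamma;z,k)\, u(t,x,(1-\theta)m,\Gamma;z,k)\, q(\theta)\,d\theta.
\end{align*}
The initial condition follows from the fact that at $t=0$ we have a single particle at $(x,m)$, so $N(0,\Gamma)$ and $M(0,\Gamma)$ are determined by whether $(x,m)$ belongs to $\Gamma$.

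Next I would Taylor-expand the first term in $dt$: using $E[b(dt)]=0$ and $E[b_i(dt)b_j(dt)]=\delta_{ij}\,dt$, the expectation over the Brownian increment contributes $\kappa\Delta u\cdot dt$ (this is just the action of the generator $\kappa\Delta$), while the linear growth in mass contributes $\partial_m u \cdot v\, dt$. Collecting all $O(dt)$ terms, subtracting $u(t,x,m,\Gamma;z,k)$, dividing by $dt$, and letting $dt\to 0$ gives exactly the stated functional-differential equation \eqref{eq:basicequations-1}.

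The only real subtlety is the passage from the discrete Brownian increment to the Laplacian term; at the formal level of the derivation of Theorem~\ref{Thereom moment generating} this is justified by Itô's formula (or equivalently by recognizing $\kappa\Delta$ as the generator of the diffusion semigroup applied to the smooth test function $u(t,\cdot,m,\Gamma;z,k)$). Everything else is a verbatim repetition of the branching argument already used for Theorem~\ref{Thereom moment generating}, with the factorization in the splitting integral again relying on the independence of the two offspring dynamics.
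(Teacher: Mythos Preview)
Your proposal is correct and follows essentially the same route as the paper: the paper itself derives the equation by the infinitesimal balance on $[0,dt]$ (Brownian displacement, deterministic mass growth, splitting with probability $\beta\,dt$, nothing otherwise), applies the full expectation formula with factorization over the two independent offspring, and passes to the limit $dt\to 0$---indeed it explicitly states that the proof is ``practically identical'' to that of Theorem~\ref{Thereom moment generating}. Your slightly more explicit treatment of the Brownian increment via the generator/It\^o expansion is the only elaboration beyond the paper's sketch.
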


\begin{proof}
The formal derivation of this equation is based on the standard technique:
balance of the probabilities in the infinitesimal initial time interval
$[0,dt]$. Namely, let's consider 
\[
u(t+dt,x,m;z,k)=Ex_{x,m}z^{N(t+dt)}e^{-kM(t+dt)}
\]
and then let's split the interval $[0,t+dt]$ into two parts $[0,dt]\cup[dt,t+dt]$.
At the moment $t=0$, we have one particle in the point $x$ with
mass $m$ and during $[0,dt]$, we observe one of the following : 
\begin{itemize}
\item Brownian motion;
\item splitting of the initial particle into two particles with probability
$\beta dt$; 
\item annihilation of the initial particle with probability $\mu dt$; 
\item nothing happen, no annihilation and no splitting with probability
$1-\beta dt-\mu dt.$ 
\end{itemize}
Now one can apply the full expectation formula: {\small{}{}{}{}
\begin{eqnarray*}
u(t+dt,x+\kappa db(t),m;z,k) & = & u(t,x+\kappa db(t),m+vdt;z,k)(1-\beta dt-\mu dt)+\\
 &  & \beta dt\int_{0}^{1}u(t,\theta m;z,k)\cdot u(t,(1-\theta)m;z,k)q(\theta)d\theta+\mu dt
\end{eqnarray*}
}Theorem 1 is obtained by letting $dt\rightarrow0$. 
\end{proof}
The proof of Theorem \ref{Thereom moment generating-1} are practically
identical to the proof of Theorem \ref{Thereom moment generating}.
We will omit the proof here.

Due to the non-linearity, moment generating function is not the best
source of the information about the particle field, it is better to
work with the statistical moments. The factorial moments of $N(t,\Gamma)$
can be calculated by partial derivative of moment generating function
with respect to $z$ at $z=1$. The moments of mass $M(t,\Gamma)$
can be obtained by differentiated with respect to $k$ at $k=0$.

Since $N(t,\Gamma)$ is the number of the particles on the set $\Gamma$,
assume $u_{z}(t,x,\Gamma)=E_{x}z^{N(t,\Gamma)}$ is the generating
function and $x$ is the location of the initial particle, then 
\begin{eqnarray}
\frac{\partial u_{z}}{\partial t} & = & \kappa\Delta u+\beta(u^{2}-u)\label{eq: equationfirst}\\
u_{z}(0,x,\Gamma) & = & z^{I_{\Gamma}}\nonumber 
\end{eqnarray}

Each path along the genealogical tree of the population on the time
interval $[0,t]$ is a Brownian trajectory with the typical range
$O(\sqrt{t})$. The number of the particles is growing exponentially
like $Exp(\beta)$ and due to small large deviation probabilities
the ``radius'' of the population has order $O(t)$. Kolmogorov described
``the boundary or the front'' of the population in terms of the
special solution of the corresponding combustion equation 
\begin{eqnarray}
\frac{\partial v}{\partial t}=\kappa\Delta v+\beta v(1-v)\label{eq: equation second}
\end{eqnarray}

It is equation (\ref{eq: equationfirst}) after substitution $v=1-u$.
In one dimension case, see \cite{Kolmogorov_1937}, the particle soliton
like solution of equation (\ref{eq: equation second}) is given by
\[
v(t,x)=\phi(x-ct)
\]
As $z\to-\infty$, $\phi(z)\to1$ and $z\to\infty$, $\phi(z)\to0$.
The function $\phi(.)$ presents the parameterization of the separatrix
connecting two critical points of the ODE 
\[
\kappa\phi''+c\phi'+\beta\phi(1-\phi)=0.
\]
Such definition of the ``front'' is not the only interesting one.
From the point of view of the population dynamics, another definitions
are also possible. Let $l_{1}(t,x,\Gamma):=E_{x}N(t,\Gamma)$, and
$l_{1}(t,x,\Gamma)=\int_{\Gamma}l_{1}(t,x,y)dy$, function $l_{1}(t,x,y)$
is density of the population at moment t starting from the single
particle at $x\in\mathbb{R}^{d}$. As easy to see, 
\begin{eqnarray*}
\frac{\partial l_{1}(t,x,y)}{\partial t} & = & \kappa\Delta_{x}l_{1}(t,x,y)+\beta l_{1}(t,x,y)\\
l_{1}(0,x,y) & = & \delta_{x}(y)
\end{eqnarray*}
i.e. 
\[
l_{1}(t,0,y)=\frac{\exp{-\frac{y^{2}}{4\kappa t}+\beta t}}{(4\kappa\pi t)^{\frac{d}{2}}}
\]

Define the ``density front'' by the relation $l_{1}(t,0,y)=1$ will
give $|y|\approx2\sqrt{\kappa\beta}t$. This is not a Kolmogorov's
definition of the front, however, it also propagates linearly in time-space.
It is convenient for the moments calculations.

Now let us consider the first moment of $M(t,\Gamma),$ let $L_{1}(t,x,m;\Gamma):=E_{x,m}M(t,\Gamma)$
$=\frac{-\partial u(t,x,m;\Gamma;z,k)}{\partial k}|_{z=1,k=0}$, then

{\small{}{}{} 
\begin{eqnarray}
\frac{\partial L_{1}(t,x,m;\Gamma)}{\partial t} & = & \kappa\Delta L_{1}(t,x,m;\Gamma)+\beta L_{1}(t,x,m;\Gamma)+\frac{\partial L_{1}(t,x,m;\Gamma)}{\partial m}v\nonumber \\
 &  & +2\beta\int_{0}^{1}(L_{1}(t,x,\theta m,\Gamma)-L_{1}(t,x,m,\Gamma))q(\theta)d\theta\label{eq:basicequations-1-1}\\
L_{1}(0,x,m;\Gamma) & = & mI_{x}(\Gamma).\nonumber 
\end{eqnarray}
} From equation (\ref{eq:basicequations-1-1}), we can see that the
operator in the right part without potential term $\beta L_{1}(t,x,m;\Gamma)$
describes two independent Markov processes with the generators: 
\begin{equation}
\mathcal{L}_{x}f=\kappa\Delta f\label{eq:Laplacian operator}
\end{equation}
and 
\begin{equation}
\mathcal{L}_{m}f=v\frac{\partial f}{\partial m}+2\beta\int_{0}^{1}[f(\theta m)-f(m)]q(\theta)d\theta.\label{eq:mass operator-1}
\end{equation}
$\mathcal{L}_{x}$ is the usual Laplacian operator correponding to
Brownina motion, $\mathcal{L}_{m}$ is the generator of mass process
on half axis $m>0$. As a result, one can find the solution of the
first moment 
\begin{eqnarray}
L_{1}(t,x,m;\Gamma)=\int_{\Gamma}\frac{\exp(-\frac{(x-y)^{2}}{4\kappa t}+\beta t)}{(4\kappa\pi t)^{\frac{d}{2}}}\rho(t,m,m')m'dm'dy.\label{eq:firstmoment}
\end{eqnarray}

Consider any bounded open set, say the ball $B_{r}(x)=\{y:|x-y|\leq r\}$,
We have that for any $B_{r}(x)$ inside the front, $E^{2}N(t,B_{r}(x))\ll Variance\ N(t,B_{r}(x))$.
More precisely, one can prove that 
\[
P\{\frac{N(t,B_{r}(x))}{EN(t,B_{r}(x))}>a\}\xrightarrow{t\to\infty}e^{-a}\ \ if\ \frac{|x|}{t}\to0
\]
The limiting distribution is exactly the same like for $\frac{N(t)}{EN(t)}$.
If $t\to\infty,\frac{|x|}{t}\to\gamma<2\sqrt{\kappa\beta}$, the limiting
distribution for $\frac{N(t,B_{r}(x))}{EN(t,B_{r}(x))}$ depends on
$\gamma$! This indicates that particles field in the region $|x|=O(t)$
is more intermittent than in the central zone. It has a structure
of relatively large but sparse clusters. The intermittent structure
of the population inside the propagating front was studied in detail
in the paper \cite{Koralov_2013}.

\section{Mass process}

The equation (\ref{eq:mass first moment-2}) contains the constant
potential $\beta-\mu$ and the operator 
\begin{equation}
\mathcal{L}_{m}f=v\frac{\partial f}{\partial m}+2\beta\int_{0}^{1}\left(f(\theta m)-f(m)\right)q(\theta)d\theta,\label{eq:mass operator}
\end{equation}
which is the generator of the one dimension Markov process $m(t)$.
This mass process $m(t)$ has the following description: it starts
at $t=0$ with the initial mass $m$ and grows linearly $m(t)=m+vt,\,$~$t\leq\tau_{1}$,
where $\tau_{1}$ is exponential distributed random variable with
parameter $2\beta.$ At the moment $\tau_{1}+0$, this particle splits
into two particles with corresponding masses 
\[
m'=(m+v\tau_{1})\theta_{1},
\]
\[
m''=(m+v\tau_{1})(1-\theta_{1}),
\]
where $\theta_{1}$ and $1-\theta_{1}$ has the same density $q(\theta)$.
By definition, 
\[
m(\tau_{1}+0)=(m+v\tau_{1})\theta_{1}.
\]
The graph of $m(t)$ is presented in the following Figure \ref{fig:Mass-process}.

\begin{figure}[H]
\begin{centering}
\includegraphics[width=0.8\columnwidth]{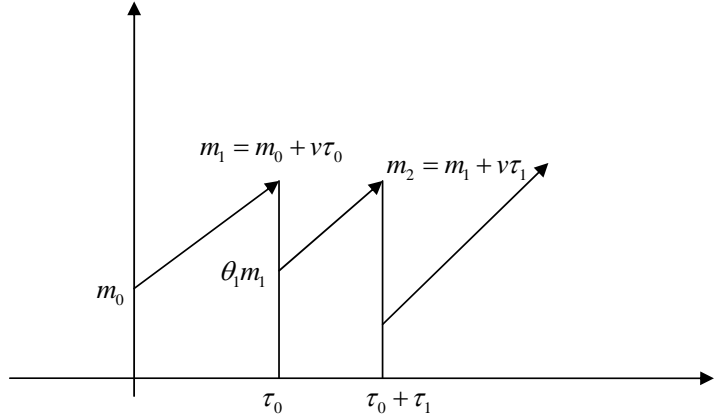} 
\par\end{centering}
\caption{Mass process\label{fig:Mass-process}}
\end{figure}

\textbf{Remark}: Factor $2\beta$ instead of $\beta$ appears due
to the fact that after splitting, we have two identical particles.

Let us consider the embedded chain $m_{n}=(m_{n-1}+v\tau_{n})\theta_{n}$,
$n\geq1$, i.e. the mass process $m(t)$ at the Poisson moments $T_{1}=\tau_{1},$
$T_{2}=\tau_{1}+\tau_{2}$, $\cdots$, $T_{n}=\tau_{1}+\cdots+\tau_{n},$
we will get recursively 
\[
\ m(T_{1})=\theta_{1}(m+v\tau_{1})
\]
Similarly, at the moment of the second splitting, 
\begin{eqnarray*}
m(T_{2}) & = & \theta_{2}(m_{1}+v\tau_{2})\\
 & = & \theta_{1}\theta_{2}m+v\tau_{2}\theta_{2}+v\tau_{1}\theta_{1}\theta_{2}\\
 & \stackrel{law}{=} & \theta_{1}\theta_{2}m+v\tau_{1}\theta_{1}+v\tau_{2}\theta_{1}\theta_{2}
\end{eqnarray*}
In general, 
\[
m(T_{n})\stackrel{law}{=}\theta_{1}\cdots\theta_{n}m+v\tau_{1}\theta_{1}+\cdots+v\tau_{n}\theta_{1}\cdots\theta_{n}
\]
so as $n\to\infty$, the limit will have the the form 
\[
m(T_{n})\xrightarrow[n\rightarrow\infty]{law}m_{\infty}=v\tau_{1}\theta_{1}+\cdots+v\tau_{n}\theta_{1}\cdots\theta_{n}+\cdots
\]
The last random series has all moments since $\tau_{i}$ is exponential
distributed with parameter $2\beta$ and $\theta_{i}$, $i=1,2,\cdots$
are bounded. This chain describes the distribution of the mass of
new born particles at the moments of splitting. Unfortunately, the
law of $m_{\infty}$ is the invariant distribution for the chain $m(T_{n})=m(\tau_{1}+\cdots+\tau_{n})$,
but not for $m(t).$ Let us find the invariant density $\Pi(m)$ for
the process $m(t).$

Denote $\nu(t)$ the number of the Poisson point $T_{i},$ $i=1,2,\cdots$
on the time interval $[0,t],$ i.e, $\nu(t)\sim Poisson(2\beta t)$.
Then for $\nu(t)=n$, 
\begin{eqnarray*}
m(t)=m_{n}+v(t-T_{n})
\end{eqnarray*}
The points $T_{1},\cdots,T_{n}$ divide $[0,t]$ onto $n+1$ sub-interval
(spacing) $\Delta_{1},\cdots,\Delta_{n+1}$ with the same distribution.
They are not independent of course since $\Delta_{1}+\cdots+\Delta_{n+1}=t$.
But the points $T_{1},T_{2},\cdots,T_{n}$ are the ordered statistics
for the set of $n$ independent and uniformly distributed on $[0,t]$
random variable. It is well known \cite{Feller_1971} that the spacing
can be presented in the form 
\begin{eqnarray*}
\Delta_{i}=\frac{Z_{i}t}{Z_{1}+\cdots+Z_{n+1}} & i & =1,\cdots,n+1
\end{eqnarray*}
where $Z_{i}$ are i.i.d random variable with exponential law $Exp(1),$
then for $\nu(t)=n$ 
\begin{eqnarray*}
m(t) & = & \left(\cdots\left(\left(\left(m+\Delta_{1}v\right)\theta_{1}+\Delta_{2}v\right)\theta_{2}+\Delta_{3}v\right)\theta_{3}+\cdots+\Delta_{n}v\right)\theta_{n}+\Delta_{n+1}v\\
 & = & \theta_{1}\cdots\theta_{n}m+\frac{t}{Z_{1}+\cdots+Z_{n+1}}\left(\left(\cdots\left(\left(\left(Z_{1}v\left)\theta_{1}+Z_{2}v\left)\text{\ensuremath{\theta_{2}}+\ensuremath{Z_{3}v}\ensuremath{\left)\theta_{3}+\right.}}\right.\right.\right.\right.\right.\right.\right.\\
 &  & \cdots+\xi_{n}v\left)\theta_{n}+\xi_{n+1}v\left)\right.\right.\\
 & = & \theta_{1}\cdots\theta_{n}m+\frac{t}{Z_{1}+\cdots+Z_{n+1}}\left(\left(\cdots\left(\left(\left(Z_{1}v\left)\theta_{1}+Z_{2}v\left)\text{\ensuremath{\theta_{2}}+\ensuremath{Z_{3}v}\ensuremath{\left)\theta_{3}+\right.}}\right.\right.\right.\right.\right.\right.\right.\\
 &  & \cdots+Z_{\nu(t)}v\left)\theta_{\nu(t)}+Z_{\nu(t)+1}v\left)\right.\right.\\
 & \stackrel{law}{=} & \theta_{1}\cdots\theta_{\nu(t)}m+\frac{t}{\nu(t)}\frac{\nu(t)}{Z_{1}+\cdots+Z_{\nu(t)+1}}\left(\xi_{0}v+\xi_{1}v\theta_{1}+\xi_{2}v\theta_{1}\theta_{2}+\cdots\right)\\
 & \xrightarrow[t\rightarrow\infty]{law} & \frac{v}{2\beta}\left(\xi_{0}+\xi_{1}\theta_{1}+\xi_{2}\theta_{1}\theta_{2}+\cdots\right)
\end{eqnarray*}
where $\xi_{i}$ are standard independent $Exp(1)$ random variable.
Note that in the last step we use the following facts : 
\begin{enumerate}
\item $Z_{i}$ are i.i.d $Exp(1)$ random variable and $\frac{\nu(t)}{Z_{1}+\cdots+Z_{\nu(t)+1}}\xrightarrow[t\rightarrow\infty]{law}\frac{1}{E(Z_{i})}=1;$ 
\item $\nu(t)\sim Poisson(2\beta t)$ and $E(\nu(t))=2\beta t;$ 
\item $\theta_{i}$ are i.i.d random variable. 
\end{enumerate}
We proved the following result: Markov mass process $m(t)$ has the
limiting distribution $\Pi(m)$ which is the law of the random variable
\begin{eqnarray*}
m_{\infty} & = & \frac{v}{2\beta}\left(\xi_{0}+\xi_{1}\theta_{1}+\xi_{2}\theta_{1}\theta_{2}+\cdots\right)\\
 & = & v\left(\tau_{0}+\tau_{1}\theta_{1}+\tau_{2}\theta_{1}\theta_{2}+\cdots\right)
\end{eqnarray*}
where $\tau_{i}\sim Exp(2\beta)$ and $\xi_{i}$ are i.i.d $Exp(1)$
random variable and $\theta_{i}$ are also i.i.d random variable with
the symmetric density $q(x)=q(1-x)$ for $x\in[0,1]$. We will assume
that $q(x)=0$ if $|x-\frac{1}{2}|\geq\delta,$ $0<\delta<\frac{1}{2},i.e.$~$0<\delta\leq\theta_{i}\leq1-\delta,$
$i=1,2,\cdots$

The transition density of the mass process $\rho(t,m,m')$, i.e. the
fundamental solution of 
\begin{eqnarray*}
\left\{ \begin{array}{l}
\frac{\partial\rho(t,m,m')}{\partial t}=\mathcal{L}_{m}\rho(t,m,m')\\
\rho(0,m,m')=\delta_{m'}(m)
\end{array}\right.
\end{eqnarray*}
have a limit $\Pi(m')={\displaystyle \lim_{t\rightarrow\infty}\rho(t,m,m')}$. 
\begin{thm}
Process $m(t)$ has the invariant density $\Pi(m)$ , this density
equals to the distribution density of the random geometric series
\begin{equation}
\xi=v\tau_{0}+v\tau_{1}\theta_{1}+\cdots+v\tau_{n}\theta_{1}\cdots\theta_{n}+\cdots\label{-1}
\end{equation}
Where $\tau_{i},i\geq0$ are i.i.d $Exp(2\beta)$ random variable
and $\theta_{i},i\geq0$ are i.i.d random variable with the probability
density $q(\theta)$, $\tau_{i}$ and $\theta_{i}$ are independent. 
\end{thm}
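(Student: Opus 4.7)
The plan is to follow and tighten the computation already sketched in the paragraphs immediately preceding the theorem. Condition on $\nu(t)=n$, the number of splittings during $[0,t]$. Given this event, the splitting times are distributed as the order statistics of $n$ i.i.d.\ uniform points on $[0,t]$, so the spacings $\Delta_1,\ldots,\Delta_{n+1}$ have the classical representation $\Delta_i = Z_i t/(Z_1+\cdots+Z_{n+1})$ with $Z_i$ i.i.d.\ $\mathrm{Exp}(1)$. Iterating the update rule $m_k=\theta_k(m_{k-1}+v\Delta_k)$ from $m_0=m$ and adding the last segment $v\Delta_{n+1}$ produces the explicit formula
\[
m(t) \;=\; m\,\theta_1\cdots\theta_n \;+\; v\sum_{j=1}^{n+1}\Delta_j\,\theta_j\theta_{j+1}\cdots\theta_n,
\]
with the convention that empty products equal $1$. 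Using the exchangeability of the i.i.d.\ families $(Z_i)$ and $(\theta_i)$, one may reverse the order of summation and relabel to obtain the equivalent-in-law expression displayed by the authors, with the geometric products running from the left.

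Next I pass to the limit $t\to\infty$. Three ingredients are needed. First, the deterministic term $m\,\theta_1\cdots\theta_n$ vanishes almost surely: under the assumption $\theta_i\in[\delta,1-\delta]$ each factor is bounded above by $1-\delta<1$, so the product decays geometrically while $\nu(t)\to\infty$ a.s. Second, the strong law of large numbers gives $\nu(t)/t\to 2\beta$ and $(Z_1+\cdots+Z_{n+1})/n\to 1$ a.s., hence $t/(Z_1+\cdots+Z_{\nu(t)+1})\to 1/(2\beta)$ a.s. Third, the random series
\[
S \;=\; \sum_{j=0}^{\infty}\xi_j\,\theta_1\cdots\theta_j
\]
converges a.s.\ and in every $L^p$: the partial products $\theta_1\cdots\theta_j\le(1-\delta)^j$ decay geometrically and the $\xi_j\sim\mathrm{Exp}(1)$ have all moments, so Kolmogorov's two-series theorem applies. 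Combining the three ingredients yields
\[
m(t)\;\xrightarrow[t\to\infty]{\mathrm{law}}\;\frac{v}{2\beta}\sum_{j=0}^\infty \xi_j\,\theta_1\cdots\theta_j \;=\; v\sum_{j=0}^\infty \tau_j\,\theta_1\cdots\theta_j,
\]
with $\tau_j=\xi_j/(2\beta)\sim\mathrm{Exp}(2\beta)$, which is the series $\xi$ claimed in \eqref{-1}.

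Finally, to identify this limit law as the invariant density $\Pi$ of the Markov semigroup generated by $\mathcal L_m$, I observe that the limiting distribution does not depend on the starting mass $m$. Combined with the Feller/strong Markov property of $m(t)$ and the standard argument that weak convergence of $\rho(t,m,\cdot)$ to a limit independent of $m$ forces that limit to be stationary, this gives $\Pi=\mathrm{law}(\xi)$ and proves uniqueness of the invariant density simultaneously. The main technical obstacle is justifying the interchange between conditioning on $\nu(t)=n$ and taking $t\to\infty$ with $n=\nu(t)$: this requires verifying that the remainder $\sum_{j>J}\xi_j\theta_1\cdots\theta_j$ is uniformly small in probability, and that the prefactor $t/(Z_1+\cdots+Z_{\nu(t)+1})$ is tight; both follow from the geometric decay noted above and the LLN, so the convergence is genuinely unconditional.
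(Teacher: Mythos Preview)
Your proposal is correct and follows essentially the same route as the paper: condition on $\nu(t)=n$, use the exponential spacing representation $\Delta_i=Z_it/(Z_1+\cdots+Z_{n+1})$, unfold the recursion, reverse the indexing by exchangeability, and pass to the limit via the LLN for $\nu(t)/t$ and $\sum Z_i/n$. You have simply made explicit the tightness/remainder justifications and the final identification of the weak limit as the stationary law, which the paper leaves implicit.
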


\textbf{Remark:} 
\begin{itemize}
\item The operator $\mathcal{L}_{m}$ is the unusual Markov generator. It
belongs to the class functional-differential operator with linearly
transformed argument which appear in many applications. See Derfel
et al. \cite{Derfel2009ASFCS}. All such Markov processes are directly
or indirectly related to the solvable group $Aff(R^{1})$ of the transformations
$x\rightarrow ax+b$ of $R^{1}\rightarrow R^{1}.$ This group has
the standard matrix representation $g=\left[\begin{array}{cc}
a & b\\
0 & 1
\end{array}\right],$ $a>0.$The simplest symmetric random walks on this group have the
form 
\begin{eqnarray}
g_{n} & = & \left[\begin{array}{cc}
e^{X_{1}+\cdots+X_{n}} & \sum Y_{i}e^{X_{1}+\cdots+X_{i-1}}\\
0 & 1
\end{array}\right]\label{eq:Affinegroup}
\end{eqnarray}
$\{X_{i}\}$ and $\{Y_{i}\},$ $i\geq1$ are symmetric i.i.d random
vector. The upper of diagonal term has the same structure like $m_{\infty}$.%
\item One can check that the law $m_{\infty}$ is invariant density for
the mass process directly. It is not difficult to verify that $\Pi(m)$
is the solution of the conjugate equation 
\begin{equation}
\mathcal{L}^{\star}g=-v\frac{\partial g}{\partial m}+2\beta\int_{0}^{1}[g(\frac{m}{\theta})-g(m)]q(\theta)d\theta=0\label{eq:rescaling}
\end{equation}
This functional-differential equation with rescaling is similar to
the archetypal equation which was studied in \cite{Derfel1989}, \cite{BDM}
and \cite{BDM_Madrid_2015}. 
\end{itemize}

\section{Analytic properties of the limiting mass distribution density}

Now we'll calculate the moment for the invariant limiting distribution
$\Pi(m)$, the calculation will be based on the following fact: 
\[
\xi=v\tau+\theta_{1}\tilde{\xi}
\]
Here $\tilde{\xi}\stackrel{law}{=}\xi$ and $\tau,\theta_{1}$ are
independent on $\tilde{\xi}$. Since $\tau$ is exponential random
variable with parameter $2\beta$, so 
\[
E\tau=\frac{1}{2\beta},E\tau^{2}=\frac{1}{2\beta^{2}}
\]
As a result, 
\[
E\xi=Ev\tau+E\theta_{1}\tilde{\xi}
\]
so 
\[
E\xi=\frac{v}{2\beta}+\frac{1}{2}E\xi
\]
thus, 
\[
E\xi=\frac{v}{\beta}
\]
The second moment 
\[
E\xi^{2}=v^{2}E(\tau^{2})+2vE(\tau\theta_{1}\tilde{\xi})+E(\theta_{1}^{2}\tilde{\xi})^{2}
\]
from the independence, then 
\[
E\xi^{2}=\frac{v^{2}}{\beta^{2}(1-E\theta_{1}^{2})}
\]
so 
\[
var\xi=\frac{v^{2}}{\beta^{2}}(\frac{1}{1-E\theta_{1}^{2}}-1).
\]
Similarly, the third moments 
\[
E\xi^{3}=\frac{3v^{3}}{2\beta^{3}(1-E\theta_{1}^{2})(1-E\theta_{1}^{3})}.
\]
In general, 
\[
E\xi^{k}=E(v\tau+\theta_{1}\tilde{\xi)}^{k}
\]

i.e.

\[
E\xi^{k}(1-E\theta_{1}^{k})=\sum_{i=0}^{k-1}\left(\begin{array}{c}
k\\
i
\end{array}\right)(vE\tau)^{i}E(\theta\xi)^{k-i}
\]

Let's find the asymptotic of $\Pi(m)$ for large m and small m. Since
\[
\xi=v\tau_{0}+v\tau_{1}\theta_{1}+\cdots+v\tau_{n}\theta_{1}\cdots\theta_{n}+\cdots
\]
Therefore, 
\begin{eqnarray*}
E_{\theta}[e^{-\lambda\xi}] & = & E_{\theta}[e^{-\lambda(v\tau_{0}+v\tau_{1}\theta_{1}+v\tau_{2}\theta_{1}\theta_{2}\cdots)}]\\
 & = & E_{\theta}[e^{-\lambda v\tau_{0}}]E_{\theta}[e^{-\lambda v\tau_{1}\theta_{1}}]\cdots[e^{-\lambda v\tau_{2}\theta_{1}\cdots\theta_{n}}]\cdots\\
 & = & \frac{1}{(1+\frac{\lambda v}{2\beta})(1+\frac{\lambda v\theta_{1}}{2\beta})\cdots(1+\frac{\lambda v\theta_{1}\cdots\theta_{n}}{\xi2\beta})\cdots}\\
 & = & \frac{c_{0}}{1+\frac{\lambda v}{2\beta}}+\frac{c_{1}}{1+\frac{\lambda v\theta_{1}}{2\beta}}+\cdots+\frac{c_{n}}{1+\frac{\lambda v\theta_{1}\cdots\theta_{n}}{2\beta}}+\cdots\\
 & = & \frac{c_{0}}{1+\frac{\lambda}{\frac{2\beta}{v}}}+\frac{c_{1}}{1+\frac{\lambda}{\frac{2\beta}{v\theta_{1}}}}+\cdots+\frac{c_{n}}{1+\frac{\lambda}{\frac{2\beta}{v\theta_{1}\cdots\theta_{n}}}}+\cdots
\end{eqnarray*}
Here, 
\[
c_{0}=\frac{1}{(1-\theta_{1})(1-\theta_{1}\theta_{2})\cdots}
\]
\[
c_{1}=\frac{1}{(1-\frac{1}{\theta_{1}})(1-\theta_{2})(1-\theta_{2}\theta_{3})\cdots}
\]
\[
c_{n}=\frac{1}{(1-\frac{1}{\theta_{1}\cdots\theta_{n}})(1-\frac{1}{\theta_{2}\cdots\theta_{n}})\cdots(1-\frac{1}{\theta_{n}})(1-\theta_{n+1})(1-\theta_{n+1}\theta_{n+2})\cdots}
\]
Now one can find conditional density $p_{\xi}(m)$ of random variable
if $\vec{\theta}=(\theta_{1},\theta_{2},\cdots)$ are known, 
\begin{eqnarray}
p_{\xi}(m) & = & E_{\vec{\theta}}[\frac{2\beta}{v}e^{-\frac{2\beta m}{v}}c_{0}]+E_{\vec{\theta}}[\frac{2\beta}{v\theta_{1}}e^{-\frac{2\beta m}{v\theta_{1}}}c_{1}]+\cdots\nonumber \\
 & = & E_{\vec{\theta}}(\frac{2\beta}{v}e^{-\frac{2\beta m}{v}}\frac{1}{(1-\theta_{1})(1-\theta_{1}\theta_{2})\cdots})\nonumber \\
 & + & E_{\vec{\theta}}(\frac{2\beta}{v\theta_{1}}e^{-\frac{2\beta m}{v\theta_{1}}}\frac{1}{(1-\frac{1}{\theta_{1}})(1-\theta_{2})(1-\theta_{2}\theta_{3})\cdots})+\cdots\nonumber \\
 & = & \frac{2\alpha\beta}{v}e^{-\frac{2\beta m}{v}}-E_{\theta_{1}}\frac{2\alpha\beta}{1-\theta_{1}}e^{-\frac{2\beta m}{v\theta_{1}}}+\cdots\label{chap1equation5-1}
\end{eqnarray}
where 
\begin{equation}
\alpha=E\frac{1}{(1-\theta_{1})(1-\theta_{1}\theta_{2})\cdots}\label{chap1equation6-1}
\end{equation}

Let's formulate several analytic results about the invariant density
. 
\begin{thm}
Assume that $Supp\theta=[a,1-a]$, $0<a\leq\frac{1}{2}$, then for
large m, 
\[
\Pi(m)\xrightarrow[m\rightarrow\infty]{}\frac{2\alpha\beta}{v}e^{-\frac{2\beta m}{v}}+R(m)
\]
The remainder term with the maximum on the boundary has order 
\[
R(m)\sim\frac{2\alpha\beta}{a}e^{-\frac{2\beta m}{v(1-a)}}L(m)
\]
Where $L(m)\xrightarrow[m\to\infty]{}0$ and $L(m)$ depends on the
structure of the distribution $q(d\theta)$ near the maximum point
$\theta_{critical}=1-a.$ 
\end{thm}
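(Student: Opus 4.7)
The plan is to start from the series expansion of the invariant density derived just above the theorem, namely
\[
\Pi(m)=\sum_{n=0}^{\infty} E_{\vec{\theta}}\!\left[\frac{2\beta\,c_n}{v\,\theta_1\cdots\theta_n}\,\exp\!\left(-\frac{2\beta m}{v\,\theta_1\cdots\theta_n}\right)\right],
\]
with the $c_n$ as in (\ref{chap1equation5-1}). Since $\mathrm{Supp}\,\theta=[a,1-a]$ one has $a^n\le\theta_1\cdots\theta_n\le(1-a)^n$, so the infinite product $\prod_{k\ge 1}(1-\theta_1\cdots\theta_k)$ converges almost surely to a strictly positive limit, all $c_n$ are a.s.\ finite with controllable tails, and in particular $\alpha=E[c_0]<\infty$ by (\ref{chap1equation6-1}).

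The leading behaviour then falls out of the $n=0$ summand, whose exponential factor is deterministic: the expectation reduces to $\frac{2\alpha\beta}{v}e^{-2\beta m/v}$, matching the claimed main term. The remainder is $R(m)=\sum_{n\ge 1}(\cdots)$, and the second step is to show that this sum is governed by its $n=1$ term. For $n\ge 2$ the uniform bound $\theta_1\cdots\theta_n\le(1-a)^n$ gives $\exp(-2\beta m/(v\theta_1\cdots\theta_n))\le\exp(-2\beta m/(v(1-a)^n))$, which is exponentially smaller than $\exp(-2\beta m/(v(1-a)))$; coupled with a H\"older (or crude envelope) bound on $E|c_n|$, these contributions are $o(e^{-2\beta m/(v(1-a))})$ and can be absorbed into $L(m)$.

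The $n=1$ term is then analysed by Laplace / Watson asymptotics. Writing $\bar c_1(\theta)=E[c_1\mid\theta_1=\theta]$ it equals
\[
\int_{a}^{1-a}\frac{2\beta\,\bar c_1(\theta)\,q(\theta)}{v\theta}\,e^{-2\beta m/(v\theta)}\,d\theta,
\]
and the exponent $-2\beta m/(v\theta)$ attains its maximum on $[a,1-a]$ at the right endpoint $\theta=1-a$. A boundary-type Laplace expansion (change of variable $\theta=1-a-s$ and expansion of the smooth factors in $s$) produces the form $\frac{2\alpha\beta}{a}\,e^{-2\beta m/(v(1-a))}\,L(m)$ with $L(m)\to 0$; the decay rate of $L(m)$ is inherited from the local behaviour of $q$ at $1-a$, for instance $L(m)\asymp m^{-\gamma-1}$ when $q(\theta)\sim c(1-a-\theta)^{\gamma}$ as $\theta\uparrow 1-a$.

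The main obstacle will be to control $\bar c_1(\theta)$ uniformly in a neighbourhood of the boundary. The coefficient $c_1$ involves the random infinite product $1/\bigl((1-1/\theta_1)(1-\theta_2)(1-\theta_2\theta_3)\cdots\bigr)$, and one must verify that its conditional expectation, given $\theta_1=\theta$, remains bounded (or at least integrable against $q(\theta)$) as $\theta\uparrow 1-a$. Once such a tail/regularity estimate is in place, the rest of the argument is a routine boundary Laplace expansion, and the theorem follows by combining the $n=0$ identity, the negligibility of $n\ge 2$, and the $n=1$ asymptotics.
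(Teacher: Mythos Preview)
Your approach is exactly the paper's: the authors' entire proof is the single sentence ``From (\ref{chap1equation5-1}), due to the Laplace method, it is trivial to get the result,'' and you have merely spelled out what that sentence means. Your flagged ``main obstacle'' dissolves immediately, since by independence of the $\theta_i$ one has $\bar c_1(\theta)=E[c_1\mid\theta_1=\theta]=-\dfrac{\theta}{1-\theta}\,E\!\left[\dfrac{1}{(1-\theta_2)(1-\theta_2\theta_3)\cdots}\right]=-\dfrac{\theta\alpha}{1-\theta}$, which is bounded on $[a,1-a]$ because $1-\theta\ge a>0$.
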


\begin{proof}
From (\ref{chap1equation5-1}), due to the Laplace method, it is trivial
to get the result. 
\end{proof}
The behavior of $p_{\xi}(m)$ as $m\to0$ is much more interesting.
Here we will use the Exponential Chebyshev's inequality. More detailed
analysis in the case when $q(d\theta)$ is a discrete (atomic) measure,
has been done in Derfel \cite{Derfel1978}, Cooke \& Derfel \cite{Cooke_Derfel_1996}.

For instance, the following result is true for the pantograph equation
(\ref{eq:pantograph}).

(i) Steady- state solution of (\ref{eq:pantograph}) satisfies the
following estimate

\begin{equation}
|y(x)|<D\exp\{{-b\ln^{2}|x|}\};\qquad D>0,\qquad b=\frac{1}{2\ln\alpha}
\end{equation}

in some neighborhood of zero.

(ii) On the other hand, every solution of (\ref{eq:pantograph}) which
satisfies estimate $y(x)|<D\exp\{{-a\ln^{2}|x|}\}$ for with some
$a>b$ is identically equal zero.

Similar results are valid also for more general equation 
\begin{equation}
y(x)=\sum_{j=0}^{l}\sum_{k=0}^{n}a_{jk}y^{(k)}(\lambda_{j}x),
\end{equation}
where $\lambda_{j}\neq0$ under the assumption that $\Lambda={\max|\lambda_{j}|}<1$.
Namely, statements (i) and (ii) are fulfilled with $b=\frac{1}{2|\ln\lambda|}$
and $a>\frac{m|\ln\lambda|}{2\ln^{2}\Lambda}$, where $\lambda={\min|\lambda_{j}|}$.

We conjecture that similar asymptotic behavior occurs also for our
model, but currently can prove the following weaker result, only.
The asymptotic approximation is shown in Figure \ref{fig:Asymptotic-behavior-of}. 
\begin{thm}
Assume that $Supp\theta=[a,1-a]$, $0<a\leq\frac{1}{2}$, then if
$m\rightarrow0$, then 
\[
P\{\xi\leq m\}\leq e^{-c_{1}\ln^{2}(\frac{1}{m})}
\]
where $c_{1}$is some constant. 
\end{thm}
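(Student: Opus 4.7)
The plan is to use the exponential Chebyshev inequality with the Laplace transform of $\xi$, exploiting the explicit product representation that was already computed in the paper. For any $\lambda>0$,
\[
P\{\xi\le m\}=P\bigl\{e^{-\lambda\xi}\ge e^{-\lambda m}\bigr\}\le e^{\lambda m}\,E\bigl[e^{-\lambda\xi}\bigr],
\]
so the task reduces to an upper bound on the Laplace transform for large $\lambda$, followed by optimisation in $\lambda$.

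First, using independence of the $\tau_i$ from each other and from the $\theta_i$, and the fact that $\tau_i\sim\mathrm{Exp}(2\beta)$, I would write
\[
E\bigl[e^{-\lambda\xi}\bigr]=E_{\theta}\prod_{n=0}^{\infty}\frac{1}{1+\mu P_n},\qquad \mu:=\frac{\lambda v}{2\beta},\qquad P_n:=\theta_1\cdots\theta_n,\quad P_0:=1.
\]
Since $\mathrm{Supp}\,\theta=[a,1-a]$ we have $P_n\ge a^n$ a.s., so each factor is dominated deterministically:
\[
E\bigl[e^{-\lambda\xi}\bigr]\le\prod_{n=0}^{\infty}\frac{1}{1+\mu a^n}.
\]
This removes all randomness and reduces the problem to a classical infinite-product estimate.

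Next I would estimate $S(\mu):=\sum_{n\ge 0}\log(1+\mu a^n)$ from below. Setting $N:=\lfloor\log\mu/\log(1/a)\rfloor$, the terms with $n\le N$ satisfy $\mu a^n\ge 1$ and hence $\log(1+\mu a^n)\ge\log\mu-n\log(1/a)$. Summing,
\[
S(\mu)\ge(N+1)\log\mu-\log(1/a)\,\frac{N(N+1)}{2}=\frac{\log^{2}\mu}{2\log(1/a)}\bigl(1+o(1)\bigr),
\]
so $E[e^{-\lambda\xi}]\le\exp\!\bigl(-\tfrac{\log^{2}\lambda}{2\log(1/a)}(1+o(1))\bigr)$ as $\lambda\to\infty$.

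Finally, the optimisation: choose $\lambda=1/m$, which makes the Chebyshev overhead $e^{\lambda m}=e$ harmless, while $\log\lambda=\log(1/m)$. Substituting gives
\[
P\{\xi\le m\}\le e\cdot\exp\!\left(-\frac{\log^{2}(1/m)}{2\log(1/a)}(1+o(1))\right),
\]
which yields the claimed bound with any constant $c_1<1/(2\log(1/a))$ for all sufficiently small $m$. I expect the only mildly delicate point to be handling the $o(1)$ error from the partial sum cleanly (in particular, verifying that the contributions from $n>N$ and the rounding of $N$ are $O(\log\mu)$ and therefore absorbed into the leading quadratic term); everything else is elementary.
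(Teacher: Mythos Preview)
Your argument is correct and follows the same strategy as the paper: exponential Chebyshev's inequality combined with the product formula for the conditional Laplace transform and the deterministic bound $\theta_1\cdots\theta_n\ge a^n$. The only difference is organisational: the paper first conditions on $\vec\theta$, locates the approximate minimiser $\lambda_0\sim \ln(1/m)/(m\,E\ln(1/\theta))$ via the critical-point equation, and then invokes $\theta_i\ge a$ at the end, whereas you apply the support bound immediately to obtain a purely deterministic product and then make the simpler (and equally effective at leading order) choice $\lambda=1/m$. Your route is tidier and yields the explicit constant $c_1$ just below $1/(2\ln(1/a))$; the paper's optimisation in $\lambda$ could in principle improve the constant toward $1/(2\,E\ln(1/\theta))$, but since the statement only asserts existence of some $c_1$, nothing is lost.
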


\begin{proof}
Let's start from the standard calculations, for $\lambda>0$ and fix
$a\leq\theta_{i}\leq1-a$, $i=1,2,\cdots$ 
\begin{eqnarray}
P\left\{ \xi\leq m|\vec{\theta}\right\}  & = & P\{e^{-\lambda\xi}>e^{-\lambda m}|\vec{\theta}\}\leq\underset{\lambda>0}{\min}\frac{Ee^{-\lambda\xi}}{e^{-\lambda m}}\nonumber \\
 & = & \underset{\lambda>0}{\min}\ e^{\lambda m-\ln(1+\frac{\lambda v}{2\beta})-\ln(1+\frac{\lambda v\theta_{1}}{2\beta})-\ln(1+\frac{\lambda v\theta_{1}\theta_{2}}{2\beta})-\cdots}\label{chap1equation7-1}
\end{eqnarray}
Equation for the critical point $\lambda_{0}=\lambda_{0}(m)$ has
a form: 
\[
m=\frac{\frac{v}{2\beta}}{1+\frac{\lambda v}{2\beta}}+\frac{\frac{v\theta_{1}}{2\beta}}{1+\frac{\lambda v\theta_{1}}{2\beta}}+\cdots+\frac{\frac{v\theta_{1}\cdots\theta_{k}}{2\beta}}{1+\frac{\lambda v\theta_{1}\theta_{2}}{2\beta}}+\cdots
\]
i.e. 
\[
m=\frac{1}{\frac{2\beta}{v}+\lambda}+\frac{1}{\frac{2\beta}{v\theta_{1}}+\lambda}+\cdots+\frac{1}{\frac{2\beta}{v\theta_{1}\cdots\theta_{k}}+\lambda}+\cdots
\]
Define $k(\lambda)=\min\{k:\frac{2\beta}{v\theta_{1}\cdots\theta_{k}}\sim\lambda\}$,
then $m\sim\frac{k(\lambda)}{\lambda}$. From $\frac{2\beta}{v\theta_{1}\cdots\theta_{k}}\sim k$,
we then have $k(\lambda)\sim\frac{\ln{\lambda}}{E\ln(\frac{1}{\theta})}$.
Hence, the critical point 
\begin{eqnarray}
\lambda\sim\frac{\ln(\frac{1}{m})}{mE\ln(\frac{1}{\theta})}\label{chap1equation8-1}
\end{eqnarray}
Substitute (\ref{chap1equation8-1}) into Chebyshev's inequality (\ref{chap1equation7-1})
gives 
\begin{eqnarray*}
P\left\{ \xi\leq m|\vec{\theta}\right\}  & \leq & e^{\frac{\ln(\frac{1}{m})}{E\ln(\frac{1}{\theta})}-\ln\left(1+\frac{v\ln(\frac{1}{m})}{2m\beta E\ln(\frac{1}{\theta})}\right)-\cdots-\ln\left(1+\frac{v\ln(\frac{1}{m})\theta_{1}\cdots\theta_{k}}{2m\beta E\ln(\frac{1}{\theta})}\right)}\\
 & \leq & e^{\frac{\ln(\frac{1}{m})}{E\ln(\frac{1}{\theta})}-\ln\left(1+\frac{v\ln(\frac{1}{m})}{2m\beta E\ln(\frac{1}{\theta})}\right)-\cdots-\ln\left(1+\frac{v\ln(\frac{1}{m})a^{k}}{2m\beta E\ln(\frac{1}{\theta})}\right)}\\
 & \leq & e^{\frac{\ln(\frac{1}{m})}{E\ln(\frac{1}{\theta})}-\sum_{i=0}^{k}\ln\left(\frac{v\ln(\frac{1}{m})a^{i}}{2m\beta E\ln(\frac{1}{\theta})}\right)}\\
 & \leq & e^{-c_{1}\ln^{2}\left(\frac{1}{m}\right)}
\end{eqnarray*}
\end{proof}
\begin{figure}[H]
\begin{centering}
\includegraphics[width=0.99\columnwidth,height=0.45\textheight]{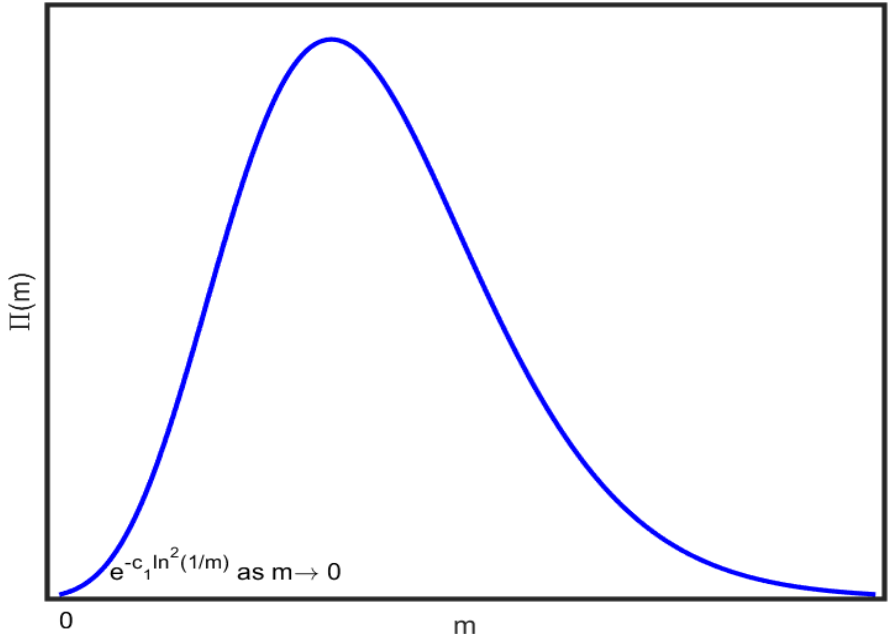} 
\par\end{centering}
\caption{\label{fig:Asymptotic-behavior-of}Asymptotic behavior of $\Pi(m)$}
\end{figure}

\section{Moments of total mass of population $M(t)$}

In this section, we will study the first moment and second moment
of the total mass of population $M(t)$. As discussed in section 1,
the first moment $L_{1}(t,m)$ is given by equation 
\begin{equation}
\left\{ \begin{array}{l}
\frac{\partial L_{1}(t,m)}{\partial t}=\frac{\partial L_{1}(t,m)}{\partial m}\ v+2\beta\int_{0}^{1}(L_{1}(t,\theta m)-L_{1}(t,m))q(\theta)d\theta+(\beta-\mu)L_{1}(t,m)\\
\hspace{2.5cm}\\
L_{1}(0,m)=m
\end{array}\right.\label{eq:mass first moment}
\end{equation}
\begin{cor}
Let $L_{1}(t,m)=E_{m}(M(t))$ then for $t\rightarrow\infty,$ 
\begin{eqnarray*}
L_{1}(t,m)\rightarrow e^{(\beta-\mu)t}\frac{v}{\beta}
\end{eqnarray*}
\end{cor}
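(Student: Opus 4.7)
The plan is to absorb the potential term into an exponential prefactor, identify the resulting equation with the Kolmogorov equation of the mass process $m(t)$ studied in Section 3, and then use the fact that $\mathcal{L}_m$ preserves the class of affine functions to reduce the functional-differential equation to a pair of scalar ODEs.

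First I would substitute $L_{1}(t,m) = e^{(\beta-\mu)t}\, h(t,m)$ into equation (\ref{eq:mass first moment}). The potential term $(\beta-\mu)L_1$ cancels, and $h(t,m)$ satisfies
\begin{equation*}
\frac{\partial h}{\partial t} \;=\; v\frac{\partial h}{\partial m} + 2\beta\int_{0}^{1}\bigl(h(t,\theta m) - h(t,m)\bigr)\,q(\theta)\,d\theta \;=\; \mathcal{L}_m h, \qquad h(0,m) = m.
\end{equation*}
This is precisely the backward Kolmogorov equation for the Markov mass process $m(t)$ of Section 3, so $h(t,m) = E_m[m(t)]$.

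Next, since the initial datum $h(0,m) = m$ is affine in $m$, I would try the ansatz $h(t,m) = A(t) + B(t)\, m$. Using the symmetry of $q$ (whence $E\theta = 1/2$), a short computation gives $\mathcal{L}_m[a+bm] = bv - \beta b m$, so affine functions are invariant under $\mathcal{L}_m$. Matching coefficients of $1$ and $m$ yields the decoupled initial value problems
\begin{equation*}
A'(t) = v B(t),\qquad B'(t) = -\beta B(t),\qquad A(0)=0,\ B(0)=1,
\end{equation*}
whose solutions are $B(t) = e^{-\beta t}$ and $A(t) = \tfrac{v}{\beta}(1 - e^{-\beta t})$. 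Hence
\begin{equation*}
h(t,m) \;=\; \frac{v}{\beta} + \Bigl(m - \frac{v}{\beta}\Bigr) e^{-\beta t},
\end{equation*}
and the bracketed quantity tends to $v/\beta$ at exponential rate $\beta$. Multiplying through by $e^{(\beta-\mu)t}$ gives the claimed asymptotic $L_1(t,m) \sim \tfrac{v}{\beta}\,e^{(\beta-\mu)t}$.

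There is no genuine obstacle. The only point that warrants care is uniqueness of the solution of the functional-differential Cauchy problem for $h$, which justifies identifying the affine ansatz with $E_m[m(t)]$; this follows from the contraction/Markov-semigroup interpretation of $\mathcal{L}_m$. Equivalently, one can apply $\mathcal{L}_m$ to the coordinate function $f(m) = m$ to obtain the closed ODE $\tfrac{d}{dt}E_m[m(t)] = v - \beta E_m[m(t)]$ directly, bypassing the ansatz, and read off the same limit $v/\beta$.
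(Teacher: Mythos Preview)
Your proof is correct, but it follows a different route from the paper. Both arguments begin by stripping off the constant potential, writing $L_{1}(t,m)=e^{(\beta-\mu)t}h(t,m)$ with $h$ solving $\partial_t h=\mathcal{L}_m h$, $h(0,m)=m$, i.e.\ $h(t,m)=E_m[m(t)]$. From there the paper appeals to the ergodic result of Section~3: the transition density $\rho(t,m,m')$ converges to the invariant density $\Pi(m')$, so $E_m[m(t)]\to\int m'\Pi(m')\,dm'=E\xi$, and then invokes the separately computed moment $E\xi=v/\beta$ of the random series $\xi=v\sum_{n\ge0}\tau_n\theta_1\cdots\theta_n$. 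You instead exploit that $\mathcal{L}_m$ maps affine functions to affine functions (because $E\theta=1/2$), reduce to the scalar ODEs $A'=vB$, $B'=-\beta B$, and obtain the explicit formula $h(t,m)=\tfrac{v}{\beta}+(m-\tfrac{v}{\beta})e^{-\beta t}$. Your approach is more elementary and self-contained: it bypasses both Theorem~3 (existence and identification of the invariant law) and the moment computation for $\xi$, and it yields the exact rate $e^{-\beta t}$ of convergence rather than a bare limit. The paper's approach, on the other hand, makes the connection to the stationary mass distribution explicit and extends more readily to the second-moment calculation that follows, where the inhomogeneous term forces a genuine Duhamel/variation-of-constants argument and the affine ansatz no longer closes.
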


\begin{proof}
From equation (\ref{eq:mass first moment}), Duhamel's formula gives
us

\begin{eqnarray*}
L_{1}(t,m)=e^{(\beta-\mu)t}\int_{0}^{\infty}\rho(t,m,m')m'dm'
\end{eqnarray*}
as $t\rightarrow\infty,$ 
\begin{eqnarray*}
L_{1}(t,m)\rightarrow e^{(\beta-\mu)t}\int_{0}^{\infty}\Pi(m')m'dm'=e^{(\beta-\mu)t}\frac{v}{\beta}.
\end{eqnarray*}
The last equality use both Theorem 2 $\rho(t,m,m')\rightarrow\Pi(m')$
and the fact that $E\xi=\frac{v}{\beta}.$ 
\end{proof}
The second moment $L_{2}(t,m)=E_{m}(M(t)^{2})$ is given by 
\begin{equation}
\left\{ \begin{array}{l}
\frac{\partial L_{2}(t,m)}{\partial t}=\frac{\partial L_{2}(t,m)}{\partial m}\ v+2\beta\int_{0}^{1}(L_{2}(t,\theta m)-L_{2}(t,m))q(\theta)d\theta+(\beta-\mu)L_{2}(t,m)\\
+2\beta\int_{0}^{1}(L_{1}(t,\theta m)L_{1}(t,(1-\theta)m))q(\theta)d\theta\\
\hspace{2.5cm}\\
L_{2}(0,m)=m^{2}
\end{array}\right.\label{eq:mass first moment-1}
\end{equation}
From equation (\ref{eq:mass first moment-1}), we have $\frac{\partial L_{2}}{\partial t}=\mathcal{L}_{m}L_{2}+f(t,m)$,
here $\mathcal{L}_{m}$ is the operator of the mass process. By applying
Duhamel's principle and one can find that

\begin{eqnarray*}
L_{2}(t,m)=2\left(e^{(\beta-\mu)t}\frac{v}{\beta}\right)^{2}+O(e^{(\beta-\mu)t})
\end{eqnarray*}

\end{document}